\newcommand{\fl}{\longrightarrow}
\newfont{\bb}{msbm10 at 12pt}
\def\h{\hbox{\bb H}}
\def\s{\hbox{\bb S}}
\def\mm{\mathbb{M}\times\mathbb{R}}
\def\m{\mathbb{M}}
\def\r{\mathbb{R}}
\newcommand{\ee}{\begin{equation}}
\newcommand{\fe}{\end{equation}}
\begin{document}

\theoremstyle{plain}\newtheorem{lem}{Lemma}
\theoremstyle{plain}\newtheorem{pro}{Proposition}
\theoremstyle{plain}\newtheorem{teo}{Theorem}
\theoremstyle{plain}\newtheorem{eje}{Example}[section]
\theoremstyle{plain}\newtheorem{no}{Remark}
\theoremstyle{plain}\newtheorem{cor}{Corollary}

\begin{center}
\rule{15cm}{1.5pt} \vspace{.6cm}

{\Large \bf Geometric barriers for the existence of hypersurfaces\\[3mm] with prescribed curvatures in M$^n\times$R.}\\[5mm]

\rule{15cm}{1.5pt}\vspace{.5cm}\\

Jos\'e A. G\'alvez$^{a\ }$\footnote{The first author is partially supported by MICINN-FEDER, Grant No. MTM2010-19821 and by Junta de Andaluc\'{\i}a Grant No. FQM325 and P09-FQM-5088.}, Victorino Lozano$^b$\vspace{0.4cm} \vspace{0.4cm}
\end{center}

%Mathematics Subject Classification: 58J05, 53A10.\\

\noindent $\mbox{}^a$ Departamento de Geometr\'{\i}a y Topolog\'{\i}a, Universidad de Granada,
E-18071 Granada, Spain; e-mail: jagalvez@ugr.es \vspace{0.2cm}

\noindent $\mbox{}^b$ Departamento de Matem\'atica Aplicada, U.N.E.D. E-13600 Alc\'azar de San Juan, Ciudad Real, Spain;
e-mail: vlozano@valdepenas.uned.es \vspace{0.3cm}

\begin{abstract}
We show the existence of a deformation process of hypersurfaces from a product space $\m_1\times\r$ into another product space $\m_2\times\r$ such that the relation of the principal curvatures of the deformed hypersurfaces can be controlled in terms of the sectional curvatures or Ricci curvatures of $\m_1$ and $\m_2$. In this way, we obtain barriers which are used for proving existence or non existence of hypersurfaces with prescribed curvatures in a general product space $\mm$.
\end{abstract}

\noindent Mathematics Subject Classification: 58J05, 53A10.\\

\section{Introduction.}
Our main objective is to describe a simple method for obtaining barriers in
a product space $\m^n\times\r$. To that end, we consider a hypersurface $S$ in a product space $\m_1\times\r$ and obtain a new hypersurface $S^\ast$ in a different product space $\m_2\times\r$ such that the principal curvatures of $S$ and $S^\ast$ can be related in terms of the sectional curvatures or Ricci curvatures of $\m_1$ and $\m_2$.

The previous method has a special interest when $S$ is a hypersurface with constant mean curvature $H$, or in general with constant r-mean curvature $H_r$, and $\m_1$ has constant sectional curvature $c$. In such a case, we obtain barriers for the existence of hypersurfaces with constant r-mean curvature $H_r$ in a general $\m_2\times\r$ if the sectional (or Ricci) curvature of $\m_2$ is bounded from above or below by $c$. 

Thus, we will generalize different known results in the homogeneous spaces $\h^n\times\r$, $\r^{n+1}$ or $\s^n\times\r$ to general product spaces $\m^n\times\r$. For the case $n=2$ the analogous method was described in \cite{GL}.

The paper is organized as follows. Section \ref{s2} is devoted to the calculus of the principal curvatures of certain graphs in a general product $\m^n\times\r$ in terms of Jacobi fields on $\m^n$ (Lemma \ref{l1}). In Section \ref{s3} we describe in detail our method for obtaining barriers in a product space $\m^n\times\r$, and obtain two comparison results. The first one relates the principal curvatures of two graphs $S$ in $\m_1\times\r$ and $S^{\ast}$ in $\m_2\times\r$ with the sectional curvatures of $\m_1$ and $\m_2$ (Theorem \ref{t1}). The second comparison result relates the mean curvatures of $S$ and $S^{\ast}$ with the Ricci curvatures of $\m_1$ and $\m_2$ (Theorem \ref{t2}). 

In Section 4 we show different examples of how these barriers can be used. Thus, in Theorem \ref{t3} we prove that given a closed geodesic ball $B_r\subseteq \m^n$ of radius $r$ then there is an explicit constant $H_0$, which only depends on the radius $r$ and the minimum of its Ricci curvature, such that there exists no vertical graph over $B_r$ in $\m^n \times \r$ with minimum of its mean curvature greater than or equal to $H_0$. This generalizes a previous result by Espinar and Rosenberg in \cite{ER} for $n=2$. In Theorem \ref{t4} we obtain an analogous result for Gauss-Kronecker curvature, or in general for r-mean curvature, depending on the sectional curvatures of the closed geodesic ball. 

Moreover, in Theorem \ref{t5},  we prove that, under certain restrictions on the ambient space $\m^n \times \r$, for every properly embedded hypersurface $\Sigma \subset \m^n \times \r$ with mean curvature $H \geq H_0 >0$, its mean convex component cannot contain a certain geodesic ball of radius $r$, where $r$ only depends on $H_0$ and the infimum of the Ricci curvature of $\m$. In particular, this shows the non existence of entire horizontal graphs over  a Hadamard manifold for certain values of the mean curvature (Corollary \ref{c1}). 	 

In Theorem \ref{t7} we also prove the existence of vertical graphs in $\m^n \times \r$ with boundary on a horizontal slice and constant mean curvature $H_0$ for any $H_0\in[0,(n-1)/n]$, when $\m^n$ is a Hadamard manifold with sectional curvature pinched between $-c^2$ and $-1$. In fact, we show that any compact hypersurface with constant mean curvature $H_0$ and the same boundary must be the previous graph or its reflection with respect to the slice. This generalizes previous results in $\h^n\times\r$ (see \cite{NSST} and \cite{BE2}).

Finally, in Theorem \ref{t8} we give a result of existence for vertical graphs with positive constant Gauss-Kronecker curvature in $\m^n \times \r$, which solves the Dirichlet problem for the associated Monge-Amp\`ere equation with zero boundary values.

\section{The principal curvatures of the graph.}\label{s2}

Let ${\cal H}$ be an (n-1)-dimensional manifold and $(\m,g)$ be an $n$-dimensional Riemannian manifold. Consider two smooth maps $i:{\cal H}\fl\m$ (non necessarily an immersion) and $n:{\cal H}\fl T\m$ such that $n(x)\in T_{i(x)}\m$ is a unit vector with $g(di_x(v),n(x))=0$ for all $v\in T_x{\cal H}$. Here, for instance, $T_p\m$ denotes the tangent space to $\m$ at the point $p\in\m$.

Let $I$ be an open real interval such that 0 is in its closure $\bar{I}$, and assume that the map
\begin{equation}\label{eliminado}
\varphi(x,t)=\exp_{i(x)}(t\ n(x)),\qquad (x,t)\in {\cal H}\times \bar{I},
\end{equation}
is smooth and $\varphi_{|{\cal H}\times I}$ a global diffeomorphism onto its image; where $\exp$ denotes the exponential map in $\m$.

Observe that $\varphi_{|{\cal H}\times I}$ can be seen as a certain parametrization of an open set of $\m$, and the parameter $t$ can be considered as a distance function to  $i(x)$.

The polar geodesic parameters at a point $p\in\m$ are examples of the previous situation. For that, one can consider ${\cal H}$ as the unit sphere of $T_p\m$, $i$ as the constant map $i(x)=p$ and $n(x)=x$.

Now, let us consider the product space $\mm$ with the standard product metric and let us call $h$ to the parameter in $\r$. Let $\psi(x,t)$ be the graph given by the height function $f(t)$ which only depends on the distance function, that is, the graph in $\mm$ parameterized as
\begin{equation}\label{psi}
\psi(x,t)=(\exp_{i(x)}(t\ n(x)),f(t))=(\varphi(x,t),f(t)).
\end{equation}

Then, one has
$$
\begin{array}{l}
\overline{\partial}_{x_i}=\partial_{x_i},\qquad i=1,\ldots,n-1\\
\overline{\partial}_t=\partial_t+f'(t)\partial_h,
\end{array}
$$
where $x=(x_1,\ldots,x_{n-1})$ are local coordinates in ${\cal H}$. Here, for instance, $\partial_{x_i}$ denotes the vector field $\frac{\partial\ }{\partial x_i}$ in $\mm$ and $\overline{\partial}_{x_i}$ the corresponding vector field in the graph.

If $\langle,\rangle=g+dh^2$ stands for the product metric in  $\mm$, then from the Gauss lemma we obtain
$$
\langle\overline{\partial}_{x_i},\overline{\partial}_{t}\rangle=g(\partial_{x_i},\partial_{t})=0.
$$

Hence, the pointing upwards unit normal of the graph is
$$
N=\frac{1}{\sqrt{1+f'(t)^2}}(-f'(t)\partial_t+\partial_h).
$$

So, if  we denote by $\overline{\nabla}$ the Levi-Civita connection in $\mm$, it is easy to see that
$$
-\overline{\nabla}_{\overline{\partial}_{t}}N=\frac{f''(t)}{(1+f'(t)^2)^{3/2}}\ \overline{\partial}_{t}.
$$
In particular, $\overline{\partial}_{t}$ is a principal direction with associated principal curvature
$$
k_n=\frac{f''(t)}{(1+f'(t)^2)^{3/2}}.
$$
Observe that this principal curvature does not depend on either ${\cal H}$, or $\m$, or its metric.

In  order to compute the rest of principal curvatures of the graph we will focus on the  directions which are orthogonal to $\overline{\partial}_{t}$, that is, the ones generated by $\overline{\partial}_{x_i}$.

Let $\gamma(t)=\varphi(x_0,t)$ be a geodesic in $\m$ and $J(t)$ a Jacobi field along $\gamma(t)$ with $g(J(t),\gamma'(t))=0$. If we denote by $\nabla$ the Levi-Civita connection in $\m$ then the second fundamental form of the graph satisfies
$$
II(J,J)=\langle-\overline{\nabla}_J N,J\rangle=g(-\nabla_J\left(\frac{-f'(t)}{\sqrt{1+f'(t)^2}}\partial_t\right),J)=\frac{f'(t)}{\sqrt{1+f'(t)^2}}\ g(\nabla_J\partial_t,J).
$$

On the other hand, since $J$ is a Jacobi field then
$$
\frac{D^2J}{dt^2}+R(J,\gamma')\gamma'=0,
$$
where, as usual, we use the notation $\frac{DJ}{dt}$ for $\nabla_{\gamma'(t)}J$ and $R(X,Y)Z=\nabla_X\nabla_YZ-\nabla_Y\nabla_XZ-\nabla_{[X,Y]}Z$.

Moreover, since $\frac{DJ}{dt}=\nabla_J\partial_t$ because $J$ is a Jacobi field, we obtain that
\begin{eqnarray*}
g(\nabla_J\partial_t,J )\left|_{t_0}\right.&=&g( J(0),\frac{DJ}{dt}(0))+\int_0^{t_0}\frac{d\ }{dt}g(\frac{DJ}{dt},J )\ dt\\
&=&g( J(0),\frac{DJ}{dt}(0) )+\int_0^{t_0}\left(\left|\frac{DJ}{dt}\right|^2-g(R(J,\gamma')\gamma',J)\right)\ dt.
\end{eqnarray*}

Observe that since $\partial_t(x,0)=n(x)$ we have that if $i$ is an immersion then
\begin{equation}\label{e1}
g( J(0),\frac{DJ}{dt}(0) )=g( \nabla_J\partial_t,J )(0)=-II_{\cal H}(J(0),J(0)),
\end{equation}
where $II_{\cal H}$ will denote the second fundamental form of $i:{\cal H}\fl\m$ in the direction of $n$. On the other hand, the amount $g( J(0),\frac{DJ}{dt}(0) )$ vanishes if $i$ is constant (as in the polar geodesic coordinates).

Given a vector field $V(t)$ along $\gamma_{|[0,t_0]}$, with $g(V,\gamma'(t))=0$, we will define the {\it index form} of $V$ as
\begin{equation}\label{e2}
{\cal I}_{(t_0,{\cal H})}(V,V)=g(V(0),\frac{DV}{dt}(0) )+\int_0^{t_0}\left(\left|\frac{DV}{dt}\right|^2-g(R(V,\gamma')\gamma',V)\right)\ dt.
\end{equation}

With all of this, we obtain
\begin{lem}\label{l1}
In the previous conditions the graph $\psi(x,t)$ given by (\ref{psi}) has a principal curvature $k_n$, with principal direction $\overline{\partial}_t$, given by
$$
k_n=\frac{f''(t)}{(1+f'(t)^2)^{3/2}}.
$$
Moreover, the second fundamental form of the graph at $\psi(x_0,t_0)$ for a tangent vector $v_0$ perpendicular to $\overline{\partial}_t$ can be computed as follows: Take a perpendicular Jacobi field $J(t)$ along the geodesic $\gamma(t)=\varphi (x_0,t)$ with $J(t_0)=v_0$
then the second fundamental form is given by
\begin{equation}\label{e3}
II(v_0,v_0)=II(J(t_0),J(t_0))=\frac{f'(t_0)}{\sqrt{1+f'(t_0)^2}}\ {\cal I}_{(t_0,{\cal H})}(J,J).
\end{equation}
\end{lem}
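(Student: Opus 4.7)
The plan is to assemble the ingredients that have already appeared in the discussion preceding the statement into a clean two-step proof, separating the normal direction $\overline{\partial}_t$ from the directions perpendicular to it.

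For the first part, I would argue that because $\varphi(x,t)$ is built from geodesics of $\m$ with initial velocity $n(x)$, the Gauss lemma gives $\langle \overline{\partial}_{x_i},\overline{\partial}_t\rangle=0$, so the unit upward normal is $N=(1+f'(t)^2)^{-1/2}(-f'(t)\partial_t+\partial_h)$. A direct computation of $-\overline{\nabla}_{\overline{\partial}_t}N$, using that $\overline{\nabla}_{\partial_t}\partial_t=0$ (since $t\mapsto\varphi(x_0,t)$ is a geodesic of $\m$) and that $\partial_h$ is parallel in the product, yields $-\overline{\nabla}_{\overline{\partial}_t}N=\frac{f''(t)}{(1+f'(t)^2)^{3/2}}\overline{\partial}_t$. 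This simultaneously shows that $\overline{\partial}_t$ is a principal direction and supplies the value of $k_n$.

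For the second part, I would fix $v_0\perp\overline{\partial}_t$ at $\psi(x_0,t_0)$ and realize it as $J(t_0)$ for a perpendicular Jacobi field $J$ along the geodesic $\gamma(t)=\varphi(x_0,t)$; this is possible because variations of $\varphi$ through nearby geodesics span exactly the space orthogonal to $\overline{\partial}_t$ in the graph. Then the computation leading up to the statement gives
\[
II(J,J)=\frac{f'(t)}{\sqrt{1+f'(t)^2}}\,g(\nabla_J\partial_t,J),
\]
so it remains only to show that $g(\nabla_J\partial_t,J)|_{t_0}$ coincides with the index form ${\cal I}_{(t_0,{\cal H})}(J,J)$. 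Using the Jacobi identity $\frac{DJ}{dt}=\nabla_J\partial_t$, I would differentiate $g(\frac{DJ}{dt},J)$ in $t$, apply the Jacobi equation $\frac{D^2J}{dt^2}=-R(J,\gamma')\gamma'$, and integrate from $0$ to $t_0$ to obtain
\[
g\!\left(\nabla_J\partial_t,J\right)\!\big|_{t_0}=g\!\left(J(0),\tfrac{DJ}{dt}(0)\right)+\int_0^{t_0}\!\left(\left|\tfrac{DJ}{dt}\right|^2-g(R(J,\gamma')\gamma',J)\right)dt,
\]
which is precisely ${\cal I}_{(t_0,{\cal H})}(J,J)$. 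Substituting back gives (\ref{e3}).

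The step I expect to require the most care is not any single calculation, but rather the justification that any $v_0\perp\overline{\partial}_t$ in the tangent plane of the graph at $\psi(x_0,t_0)$ is realized as $J(t_0)$ for some perpendicular Jacobi field along $\gamma$; this relies on the fact that $\varphi$ is a diffeomorphism on ${\cal H}\times I$, which guarantees that the $(n-1)$-parameter family of geodesics $t\mapsto\varphi(x,t)$ provides, via $\partial_{x_i}$, a basis of Jacobi fields perpendicular to $\partial_t$. Once this is in place, the rest is bookkeeping already done in the text, and formula (\ref{e1}) fits the general framework by identifying the boundary term $g(J(0),\frac{DJ}{dt}(0))$ with $-II_{\cal H}(J(0),J(0))$ when $i$ is an immersion, or with $0$ when $i$ is constant.
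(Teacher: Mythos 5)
Your proposal is correct and follows essentially the same route as the paper, which develops the argument in the text immediately preceding the lemma: compute the unit normal from the Gauss lemma, differentiate it in the $\overline{\partial}_t$ direction to get $k_n$, then pass to Jacobi fields and integrate the derivative of $g(\frac{DJ}{dt},J)$ against the Jacobi equation to produce the index form. You also rightly flag the one implicit step in the paper's statement: the identity $\frac{DJ}{dt}=\nabla_J\partial_t$ and the boundary term $g(J(0),\frac{DJ}{dt}(0))=-II_{\cal H}(J(0),J(0))$ (or $0$) hold only for the Jacobi field arising as a variation field of $\varphi$ (equivalently, with the initial condition at $t=0$ fixed by $i$ and $n$, as the paper spells out later in the proof of Theorem~\ref{t1}), not for an arbitrary perpendicular Jacobi field with $J(t_0)=v_0$; your diffeomorphism argument is exactly what guarantees such a $J$ exists and is unique for each $v_0$.
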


\section{Comparison results.}\label{s3}

The above lemma will help us to compare the principal curvatures of two graphs $\psi_j(x,t)$, $j=1,2$, with the same height function in two different product spaces $\m_j\times\r$. For that, we need to relate the index forms in the manifolds $\m_1$ and $\m_2$.

Thus, let $(\m_1,g_1)$, $(\m_2,g_2)$ be two Riemannian manifolds with $dim(\m_1)\leq dim(\m_2)$ and ${\cal H}_j$, $j=1,2,$ two smooth manifolds with $dim({\cal H}_j)= dim(\m_j)-1$.  Consider smooth maps $i_j:{\cal H}_j\fl\m_j$  and $n_j:{\cal H}_j\fl T\m_j$ such that $n_j(x)\in T_{i_j(x)}\m_j$ is a unit vector with $g(d(i_j)_x(v),n_j(x))=0$ for all $v\in T_x {\cal H}_j$.

Moreover, assume that the maps
$$
\varphi_j(x,t)=\exp_{i_j(x)}(t\ n_j(x)),\qquad (x,t)\in {\cal H}_j\times \bar{I},
$$
are smooth and ${\varphi_j}_{|{\cal H}_j\times I}$ a global diffeomorphism onto its image, where  $I$ is an open real interval such that 0 is in its closure $\bar{I}$. Then, we consider the two graphs
\begin{equation}\label{psi2}
\psi_j(x,t)=(\exp_{i_j(x)}(t\ n_j(x)),f(t))=(\varphi_j(x,t),f(t)),
\end{equation}
for the same height function $f(t)$ with $f'(t)\geq0$.

In order to compare the second fundamental forms of both graphs, let $\overline{\i}:{\cal H}_1\fl {\cal H}_2$ be an immersion, $x_0\in {\cal H}_1$ and $\gamma_j:[0,t_0]\fl\m_j$ be the geodesics
$$
\gamma_1(t)=\varphi_1(x_0,t),\qquad \gamma_2(t)=\varphi_2(\overline{\i}(x_0),t).
$$
\begin{teo}\label{t1}
In the previous conditions assume $K^1_{\gamma_1(t)}(\pi_1)\leq K^2_{\gamma_2(t)}(\pi_2)$, $t\in[0,t_0]$, for each planes $\pi_j$, where $K^j_{\gamma_j(t)}(\pi_j)$ is the sectional curvature of a plane $\pi_j$ in $\m_j$ containing $\gamma'_j(t)$. If:
\begin{enumerate}
\item $i_j$ are constant, or
\item $i_j$ are immersions and $II_{{\cal H}_1}(v,v)\leq II_{{\cal H}_2}(w,w)$ for all $v\in di_1(T_{x_0}{\cal H}_1), w\in di_2(T_{\overline{\i}(x_0)}{\cal H}_2)$ with $|v|=|w|$,
\end{enumerate}
then the second fundamental forms of the graphs satisfy
$$
II_1(V,V)\geq II_2(W,W)
$$
for all tangent vectors $V,W$ such that $|V|=|W|$ and $\langle V,\gamma'_1(t_0)\rangle=0=\langle W,\gamma'_2(t_0)\rangle$.

In particular, every principal curvature of the graph $\psi_1$ at $\varphi_1(x_0,t_0)$ is greater than or equal to every principal curvature of the graph $\psi_2$ at $\varphi_2(\overline{\i}(x_0),t_0)$.
\end{teo}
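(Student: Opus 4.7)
The plan is to reduce to an index-form inequality along the two geodesics and then prove it by a parallel-transplant argument of Rauch type, combined with minimization of Jacobi fields for the natural submanifold boundary condition. By Lemma \ref{l1}, both graphs share the principal direction $\overline{\partial}_t$ with common principal curvature $k_n=f''(t_0)/(1+f'(t_0)^2)^{3/2}$, so I only need to treat tangent vectors perpendicular to $\gamma_j'(t_0)$, and formula (\ref{e3}) together with $f'(t_0)\geq 0$ reduces $II_1(V,V)\geq II_2(W,W)$ to $\mathcal{I}_{(t_0,\mathcal{H}_1)}(J_1,J_1)\geq \mathcal{I}_{(t_0,\mathcal{H}_2)}(J_2,J_2)$, where $J_j$ is the perpendicular Jacobi field along $\gamma_j$ with $J_j(t_0)$ equal to the prescribed vector.

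For the transplant I pick parallel orthonormal frames $\{E_i^j(t)\}$ along $\gamma_j$, perpendicular to $\gamma_j'$, arranged so that $E_1^1(t_0)=V/|V|$ and $E_1^2(t_0)=W/|W|$. Writing $J_1(t)=\sum_i a_i(t)E_i^1(t)$ (the index $i$ fits into the second frame because $\dim\mathcal{H}_1\leq\dim\mathcal{H}_2$), I define $\tilde V(t):=\sum_i a_i(t)E_i^2(t)$ along $\gamma_2$. Orthonormality and parallelism immediately give $\tilde V(t_0)=W$, $|\tilde V(t)|=|J_1(t)|$, $|D\tilde V/dt|=|DJ_1/dt|$, and the identity $g_2(\tilde V(0),D\tilde V/dt(0))=g_1(J_1(0),DJ_1/dt(0))$, with $\tilde V(0)\in \gamma_2'(0)^\perp=di_2(T\mathcal{H}_2)$ automatic in case 2. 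Applying the sectional-curvature hypothesis to $\mathrm{span}(J_1(t),\gamma_1'(t))$ and $\mathrm{span}(\tilde V(t),\gamma_2'(t))$, together with $|\tilde V|=|J_1|$, then yields $\mathcal{I}_{(t_0,\mathcal{H}_2)}(\tilde V,\tilde V)\leq \mathcal{I}_{(t_0,\mathcal{H}_1)}(J_1,J_1)$.

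To exchange $\tilde V$ for the actual Jacobi field $J_2$, I introduce the auxiliary functional
\[
\tilde{\mathcal{I}}_2(V,V):=-II_{\mathcal{H}_2}(V(0),V(0))+\int_0^{t_0}\left(\left|\tfrac{DV}{dt}\right|^2-g_2(R_2(V,\gamma_2')\gamma_2',V)\right)dt
\]
(in case 1 the boundary term is replaced by the constraint $V(0)=0$). The diffeomorphism hypothesis on $\varphi_2$ rules out focal points of $i_2(\mathcal{H}_2)$ in $[0,t_0)$, so $J_2$, with its natural transverse boundary condition $DJ_2/dt(0)=-A_{\mathcal{H}_2}J_2(0)$, minimizes $\tilde{\mathcal{I}}_2$ among perpendicular fields with value $W$ at $t_0$, and identity (\ref{e1}) gives $\tilde{\mathcal{I}}_2(J_2,J_2)=\mathcal{I}_{(t_0,\mathcal{H}_2)}(J_2,J_2)$. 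The other half of (\ref{e1}), applied to $J_1$, combined with the submanifold hypothesis $II_{\mathcal{H}_1}\leq II_{\mathcal{H}_2}$ on equal-norm vectors, yields $\tilde{\mathcal{I}}_2(\tilde V,\tilde V)\leq \mathcal{I}_{(t_0,\mathcal{H}_2)}(\tilde V,\tilde V)$. Chaining these four inequalities produces the desired reduction, and the principal-curvature statement follows from a min-max argument using the shared eigendirection $\overline{\partial}_t$ with common eigenvalue $k_n$. The main obstacle is this boundary-term mismatch: the index form of Lemma \ref{l1} is not directly extremalized by the Jacobi fields coming from $\mathcal{H}_j$, and both halves of the submanifold hypothesis (via (\ref{e1}) on each side) are needed to bridge the two functionals.
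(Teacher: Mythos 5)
Your proof is correct and follows essentially the same route as the paper's: transplant the Jacobi field $J_1$ along $\gamma_2$ via parallel orthonormal frames, invoke the minimizing (index) property of $J_2$ for the submanifold boundary condition, and close the chain using $|\cdot|$-preservation of the transplant, the sectional-curvature hypothesis, the $II_{{\cal H}_j}$ hypothesis, and identity (\ref{e1}). The one place you are more explicit than the paper is in introducing the auxiliary functional $\tilde{\cal I}_2$ with the $-II_{{\cal H}_2}$ boundary term to mediate between the index lemma and the $g(V(0),DV/dt(0))$ boundary term in (\ref{e2}); the paper folds this bookkeeping into a single appeal to the minimizing property plus (\ref{e1}), but the underlying ingredients and inequalities are the same.
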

\begin{proof}
Let $V\in T_{\gamma_1(t_0)}\m_1$ and $W\in T_{\gamma_2(t_0)}\m_2$ with $$|V|=1=|W|\quad \text{ and }\quad  g_1(V,\gamma_1'(t_0))=0=g_2(W,\gamma_2'(t_0)).$$ As above, $V$ and $W$ are identified as tangent vectors to the graphs at the points $\psi_j(\gamma_j(t_0),f(t_0))$, $j=1,2$, respectively.

There exist unique perpendicular Jacobi fields $J_j:[0,t_0]\fl T\m_j$, $j=1,2$, along the geodesics $\gamma_1$ and $\gamma_2$ respectively, such that $J_1(t_0)=V$, $J_2(t_0)=W$ with the additional property: $J_j(0)=0$ if $i_j$ is constant, or $-dn(J_j(0))+\frac{DJ_j}{dt}(0)$ is proportional to $\gamma_j'(0)$ if $i_j$ is an immersion (see, for instance, \cite{Wa}).

Let $\{e_k(t)\}_{k=1}^m$ be an orthonormal basis of parallel vector fields along $\gamma_1$, which are perpendicular to $\gamma_1'(t)$, with $m=dim(\m_1)-1$, and such that $e_1(t_0)=J_1(t_0)$. In a similar way, let $\{b_k(t)\}_{k=1}^n$ be an orthonormal basis of parallel vector fields along $\gamma_2$, which are perpendicular to $\gamma_2'(t)$, with $n=dim(\m_2)-1$, and such that $b_1(t_0)=J_2(t_0).$

We define the functions $a_k(t)$ as the ones given by the equality
$$
J_1(t)=\sum_{k=1}^m a_k(t) e_k(t).
$$
And consider a new vector field along $\gamma_2(t)$ given as
$$
W(t)=\sum_{k=1}^m a_k(t) b_k(t).
$$

Since $W(t_0)=J_2(t_0)$ the minimizing property of the Jacobi fields (see \cite{Wa}) gives us
$$
{\cal I}_{(t_0,{\cal H}_2)}(J_2,J_2)\leq {\cal I}_{(t_0,{\cal H}_2)}(W,W).
$$
Hence,
$$
II_{2}(J_2(t_0),J_2(t_0))=\frac{f'(t_0)}{\sqrt{1+f'(t_0)^2}}\ {\cal I}_{(t_0,{\cal H}_2)}(J_2,J_2)\leq \frac{f'(t_0)}{\sqrt{1+f'(t_0)^2}}\ {\cal I}_{(t_0,{\cal H}_2)}(W,W).
$$

Since $|J_1(t)|=|W(t)|$, $|\frac{DJ_1}{dt}(t)|=|\frac{DW}{dt}(t)|$ and $II_{{\cal H}_1}(J_1(0),J_1(0))\leq II_{{\cal H}_2}(W(0),W(0))$ when $i_j$ is an immersion, then we obtain from (\ref{e1}), (\ref{e2}), (\ref{e3}) and the previous inequality that
$$
II_{2}(J_2(t_0),J_2(t_0))\leq II_{1}(J_1(t_0),J_1(t_0)),
$$
as we wanted to show.
\end{proof}
A different proof of this result was given in \cite{GL} when $dim(\m_1)=dim(\m_2)=2$ as well as many applications.

Theorem \ref{t1} gives us a criterium for comparing all the principal curvatures of a graph at a point with all the principal curvatures of another graph at the corresponding point. Now, we look for some weaker conditions in order to compare the mean curvature of both graphs.

\begin{teo}\label{t2}
In the previous conditions assume $dim(\m_1)=dim(\m_2)$ and the metric of $\m_1$ can be written as
\begin{equation}\label{model}
g_1=dt^2+G(t)g_0,
\end{equation}
where $g_0$ is the (n-1)-dimensional metric of a space form. If $Ric^1(\gamma_1'(t))\leq Ric^2(\gamma_2'(t))$, where $Ric^j(\gamma_j'(t))$ denotes the Ricci curvature in the direction of the unit vector $\gamma_j'(t)$, and
\begin{enumerate}
\item $i_j$ are constant, or
\item $i_j:{\cal H}_j\fl\m_j$ are immersions and their mean curvatures $H_{{\cal H}_j}$ satisfy $H_{{\cal H}_1}(x_0)\leq H_{{\cal H}_2}(\overline{\i}(x_0))$,
\end{enumerate}
then the mean curvatures $H_j$ of the graphs in $\m_j\times\r$ satisfy
$$
H_1(\gamma_1(t_0))\geq H_2(\gamma_2(t_0)).
$$
\end{teo}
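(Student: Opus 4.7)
\medskip
\noindent\emph{Proof proposal.} The plan is to follow the strategy of Theorem \ref{t1}, but to take traces over an orthonormal basis of the $n-1$ transverse directions so that sectional curvatures are replaced by Ricci curvatures and the full second fundamental form of ${\cal H}_j$ is replaced by its mean curvature. I would first apply Lemma \ref{l1} to write, for $j=1,2$,
$$
n\,H_j(\gamma_j(t_0))=k_n+\frac{f'(t_0)}{\sqrt{1+f'(t_0)^2}}\sum_{k=1}^{n-1}{\cal I}_{(t_0,{\cal H}_j)}(J_k^j,J_k^j),
$$
where $\{J_k^j(t_0)\}_{k=1}^{n-1}$ is an orthonormal basis of $(\gamma_j'(t_0))^\perp$ and $J_k^j$ is the perpendicular Jacobi field along $\gamma_j$ with the boundary condition at $t=0$ prescribed by $i_j$. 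Since $k_n$ is common to both graphs and $f'(t_0)/\sqrt{1+f'(t_0)^2}$ is nonnegative, the theorem reduces to proving $\sum_k{\cal I}_{(t_0,{\cal H}_1)}(J_k^1,J_k^1)\ge\sum_k{\cal I}_{(t_0,{\cal H}_2)}(J_k^2,J_k^2)$.

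The decisive step uses the special form (\ref{model}) of $g_1$: because $g_0$ has constant sectional curvature, the operator $X\mapsto R^1(X,\gamma_1'(t))\gamma_1'(t)$ acts on $(\gamma_1'(t))^\perp$ as a scalar multiple $K^{\mathrm{rad}}(t)\,\mathrm{Id}$ with $(n-1)K^{\mathrm{rad}}(t)=Ric^1(\gamma_1'(t))$. In this setting $i_1({\cal H}_1)$ is naturally a slice, hence totally umbilical with principal curvature equal to $H_{{\cal H}_1}(x_0)$, so for any parallel orthonormal basis $\{V_k(t)\}_{k=1}^{n-1}$ of $(\gamma_1'(t))^\perp$ along $\gamma_1$ the Jacobi field $J_k^1$ separates as $J_k^1(t)=\phi(t)V_k(t)$ for a \emph{single} scalar $\phi$ solving $\phi''+K^{\mathrm{rad}}\phi=0$, $\phi(t_0)=1$, $\phi'(0)=-H_{{\cal H}_1}(x_0)\phi(0)$ (and $\phi(0)=0$ in the constant case).

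Next, as in the proof of Theorem \ref{t1}, I would choose on $\gamma_2$ a parallel orthonormal frame $\{Y_k(t)\}_{k=1}^{n-1}\subset(\gamma_2'(t))^\perp$, set $W_k(t)=\phi(t)Y_k(t)$ and, for each $k$, invoke the index lemma to obtain
$$
{\cal I}_{(t_0,{\cal H}_2)}(J_k^2,J_k^2)\le -II_{{\cal H}_2}(W_k(0),W_k(0))+\int_0^{t_0}\!\left(\left|\tfrac{DW_k}{dt}\right|^2-g_2(R^2(W_k,\gamma_2')\gamma_2',W_k)\right)dt.
$$
Summing over $k$ and invoking the trace identities $\sum_k II_{{\cal H}_2}(Y_k(0),Y_k(0))=(n-1)H_{{\cal H}_2}(\overline{\i}(x_0))$ and $\sum_k g_2(R^2(Y_k,\gamma_2')\gamma_2',Y_k)=Ric^2(\gamma_2'(t))$ collapses this bound, together with the analogous explicit computation of $\sum_k{\cal I}_{(t_0,{\cal H}_1)}(J_k^1,J_k^1)$, to closed expressions in $\phi,\phi'$, the two mean curvatures and the two Ricci curvatures, whose difference is
$$
(n-1)\phi(0)^2\bigl(H_{{\cal H}_2}(\overline{\i}(x_0))-H_{{\cal H}_1}(x_0)\bigr)+\int_0^{t_0}\phi(t)^2\bigl(Ric^2(\gamma_2'(t))-Ric^1(\gamma_1'(t))\bigr)dt.
$$
This quantity is nonnegative by the two hypotheses, which finishes the proof.

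The main obstacle will be to secure the common radial profile $\phi$ for every $J_k^1$: this rests on $R^1(\cdot,\gamma_1')\gamma_1'$ being a scalar operator on $(\gamma_1'(t))^\perp$ (which is exactly what the warped product over a space form provides) together with the umbilicity of $i_1({\cal H}_1)$ built into that structure, so that all principal curvatures of ${\cal H}_1$ at $x_0$ collapse to $H_{{\cal H}_1}(x_0)$. Once this uniformization is in place, the argument is a direct tracewise version of the index-form comparison used in Theorem \ref{t1}, which is precisely why the weaker Ricci bound suffices to control mean rather than principal curvatures.
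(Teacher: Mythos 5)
Your proposal is correct and follows essentially the same route as the paper: express the trace of the second fundamental form via Lemma~\ref{l1}, exploit the model structure~(\ref{model}) to show that all transverse Jacobi fields along $\gamma_1$ share a single radial profile $\phi(t)$ in a parallel frame, transplant $\phi(t)$ onto a parallel frame along $\gamma_2$ to build test fields, and apply the submanifold index lemma before summing over the frame. The only cosmetic difference is that you make explicit the umbilicity of $i_1(\mathcal{H}_1)$ as a slice of the model and write the boundary term of the index form for the test fields directly as $-II_{\mathcal{H}_2}(W_k(0),W_k(0))$, whereas the paper keeps the boundary term as $g(W_i(0),\tfrac{DW_i}{dt}(0))$ (which by construction equals $g(J_i^1(0),\tfrac{DJ_i^1}{dt}(0))$) and folds the mean-curvature comparison into the application of the minimizing property; both versions reduce to the same nonnegative difference you display.
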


\begin{proof} In order to compare the mean curvatures of the graphs given by (\ref{psi2}), we use the trace of the second fundamental forms at points $\psi_1(x_0, t_0)$ and $\psi_2(\overline{\i}(x_0), t_0)$. For this, from (\ref{e3}) and the fact that the function $f(t)$ is increasing, it is sufficient to compare the corresponding sums of the index forms.\\

Let $\{e_k(t)\}_{k=1}^{n-1}$ be an orthonormal basis of parallel fields along $\gamma_1(t)$, orthogonal to $\gamma'_1(t)$, and let $\{b_k(t)\}_{k=1}^{n-1}$ be an orthonormal basis of parallel fields along $\gamma_2(t)$, orthogonal to $\gamma'_2(t)$ with $t \in [0, t_0]$. Then, there exist unique perpendicular Jacobi fields $J_k^j:[0,t_0]\longrightarrow TM_j$, with $j=1,2$ and $k=1,...,n-1$ along $\gamma_1(t)$ and $\gamma_2(t)$ respectively, such that
\begin{enumerate}
  \item $J_k^1(t_0)=e_k(t_0), \quad J_k^2(t_0)=b_k(t_0), \quad k=1,...,n-1.$
  \item $J_k^j(0)=0 $ if $i_j$ are constant maps, and so $$\sum_{k=1}^{n-1} g_j(J_k^j(0),\frac{DJ_k^j}{dt}(0))=0, \quad j=1,2; \quad \mbox{or}$$
   $-dn(J_k^j(0))+\frac{DJ_k^j}{dt}(0)$ is proportional to $\gamma'_j(0) $ if $i_j$ are immersions (see, for instance, \cite{Wa}), and so $$\sum_{k=1}^{n-1}g_1(J_k^1(0),\frac{DJ_k^1}{dt}(0))=-(n-1)\,H_{\mathcal{H}_1}(x_0),$$ and
   $$\sum_{k=1}^{n-1}g_2(J_k^2(0),\frac{DJ_k^2}{dt}(0))=-(n-1)\,H_{\mathcal{H}_2}(\overline{\i}(x_0)).$$
\end{enumerate}

From (\ref{model}) the previous Jacobi fields $J_k^1$ in $\m_1$ satisfy
$$|J_k^1(t)|=|J_i^1(t)| \quad \mbox{and} \quad |\frac{DJ_k^1}{dt}(t)|=|\frac{DJ_i^1}{dt}(t)| ,\qquad \mbox{with} \quad i, k = 1,...,n-1.$$

Now, we define the functions $a_{ik}$ as the ones given by the equalities
$$J_i^1(t)=\sum_{k=1}^{n-1} a_{ik}(t) \,e_k(t), \quad i=1,...,n-1.$$
Consider the new vector fields $W_i(t)$ along $\gamma_2(t)$ given by
$$W_i(t)=\sum_{k=1}^{n-1} a_{ik}(t) \,b_k(t), \quad i=1,...,n-1.$$
In these conditions, $a_{ik}(t_0)=\delta_{ik}$ and $W_i(t_0)=J_i^2(t_0)$. Moreover, by construction,
$$|J_i^1(t)|=|W_i(t)| \quad \mbox{and} \quad |\frac{DJ_i^1}{dt}(t)|=|\frac{DW_i}{dt}(t)|.$$

Observe now that if $i_j$ are constant maps
$$\sum_{i=1}^{n-1} g(J_i^2(0),\frac{DJ_i^2}{dt}(0))= \sum_{i=1}^{n-1} g(W_i(0),\frac{DW_i}{dt}(0))= \sum_{i=1}^{n-1} g(J_i^1(0),\frac{DJ_i^1}{dt}(0)) = 0.$$
On the other hand, if the maps $i_j$ are immersions, then $$\sum_{k=1}^{n-1}g(J_k^2(0),\frac{DJ_k^2}{dt}(0))=-(n-1)\,H_{\mathcal{H}_2}(\overline{\i}(x_0)) \leq -(n-1)\,H_{\mathcal{H}_1}(x_0) = \sum_{k=1}^{n-1}g(J_k^1(0),\frac{DJ_k^1}{dt}(0)).$$

In addition, the fields ${W_i(t)}$ are also orthogonal on $[0,t_0]$ by construction, and $|W_i(t)|=|W_1(t)|$, $i=1,\ldots,n-1$. Hence
$$-\sum_{i=1}^{n-1}g_2(R(W_i,\gamma_2')\gamma_2',W_i)  = -(n-1)
|W_1|^2\,Ric^2(\gamma_2'(t))\leq$$ $$\leq -(n-1)|J_1|^2\,Ric^1(\gamma_1'(t)) =-\sum_{i=1}^{n-1}g_1(R(J_i^1,\gamma_1')\gamma_1',J_i^1),$$

%$$-\frac{1}{|W_1|^2}\sum_{i=1}^{n-1}g_2(R(W_i,\gamma_2')\gamma_2',W_i) =-\sum_{i=1}^{n-1}\frac{1}{|W_i|^2}g_2(R(W_i,\gamma_2')\gamma_2',W_i) = -(n-1)\,Ric^2(\gamma_2'(t))$$
%$$\leq -(n-1)\,Ric^1(\gamma_1'(t)) =-\sum_{i=1}^{n-1}\frac{1}{|J_i^1|^2}g_1(R(J_i^1,\gamma_1')\gamma_1',J_i^1)=-\frac{1}{|J_i^1|^2}\sum_{i=1}^{n-1}g_1(R(J_i^1,\gamma_1')\gamma_1',J_i^1),$$
%and

%$$-\sum_{i=1}^{n-1}g(R(W_i,\gamma_2')\gamma_2',W_i) \leq -\sum_{i=1}^{n-1}g(R(J_i^1,\gamma_1')\gamma_1',J_i^1).$$

In these conditions, and by the minimizing property of the Jacobi fields, it is obtained

$$\sum_{i=1}^{n-1} {\cal I}_{(t_0,{\mathcal{H}_2})}(J_i^2(t_0),J_i^2(t_0))=$$
$$=\sum_{i=1}^{n-1} g_2(J_i^2(0),\frac{DJ_i^2}{dt}(0))+\int_0^{t_0}\left(\sum_{i=1}^{n-1}\left|\frac{DJ_i^2}{dt}\right|^2-\sum_{i=1}^{n-1}g_2(R(J_i^2,\gamma_2')\gamma_2',J_i^2)\right)\ dt$$
$$ \leq \sum_{i=1}^{n-1} g_2(W_i(0),\frac{DW_i}{dt}(0))+\int_0^{t_0}\left(\sum_{i=1}^{n-1}\left|\frac{DW_i}{dt}\right|^2-\sum_{i=1}^{n-1}g_2(R(W_i,\gamma_2')\gamma_2',W_i)\right)\ dt$$
$$ \leq \sum_{i=1}^{n-1} g_1(J_i^1(0),\frac{DJ_i^1}{dt}(0))+\int_0^{t_0}\left(\sum_{i=1}^{n-1}\left|\frac{DJ_i^1}{dt}\right|^2-\sum_{i=1}^{n-1}g_1(R(J_i^1,\gamma_1')\gamma_1',J_i^1)\right)\ dt$$
$$=\sum_{i=1}^{n-1} {\cal I}_{(t_0,{\mathcal{H}_1})}(J_i^1(t_0),J_i^1(t_0))$$
as we wanted to show.
\end{proof}

\begin{no}
Observe that a manifold $\m_1$ whose metric is described by (\ref{model}) in geodesic polar coordinates is classically known as a {\it model manifold} (see \cite{GW}).
\end{no}

\section{Existence of barriers in $\m^n \times \r$.}\label{s4}

Our comparison results will allow us to extend some results only known for $\m^n\times\r$  when $\m^n$ is a space form to general ambient spaces $\m^n\times\r$. Thus, in this section we will follow our approach in \cite{GL} for obtaining some existence and non existence results for hypersurfaces in $\m^n\times\r$.

From now on, we denote by $\m^n (c)$ the complete simply connected n-dimensional space form of constant curvature $c$, that is a hyperbolic space if $c<0$, the Euclidean space if $c=0$ or a sphere if $c>0$. Let $s_{c,n}= \frac{n-1}{n} \sqrt{-c}$ be the infimum of the mean curvature of the topological spheres of constant mean curvature in $\m^n(c) \times \r$ when $c<0$. Also, for each $H_0>0$ ($H_0 > s_{c,n}$ if $c<0$) we denote by $r_{c,n}(H_0)$ the radius of the topological sphere of constant mean curvature $H_0$ in $\m^n(c) \times \r$, and for each $K_0>0$ we will denote by $r_{c,n}^*(K_0)$ the radius of the topological sphere of constant Gauss-Kronecker curvature $K_0>0$ in the same ambient space.

Let us start with a topological sphere $S$ of constant mean curvature $H_0$ in $\m^n (c)\times \r$ (see, for instance, \cite{AR, AEG, BE2, HH, PR}). Observe that $S$ is unique up to isometries of the ambient space and only exists for $H_0>s_{c,n}$ if $c<0$. Moreover, $S$ is rotational with respect to a vertical axis and symmetric with respect to a horizontal slice. In particular, $S$ is a bigraph over a geodesic ball of $\m^n (c)$ of radius $r_{c,n}(H_0)>0$.

Thus, let $p\in \m^n (c)$ and $(x,t)$ be geodesic polar coordinates around $p$. Since $S$ is a rotational surface, the lower part of $S$ can be considered as a graph over the geodesic ball centered at $p$ and radius $r_{c,n}(H_0)$, with height function $h(t)$ which only depends on the distance function $t$ to the point $p$. Moreover, $h(t)$ is strictly increasing. Hence, this part of the  hypersurface $S$ of constant mean curvature can be described as
$$ \psi_1(x,t)=(x,t,h(t)) \in \m^n (c) \times \r. $$
Note that, for convenience, we have deleted the parametrization $\varphi$ (given by (\ref{eliminado})) in the previous expression.

Now, given  an n-dimensional Riemannian manifold $\m^n$  and geodesic polar coordinates $(x,t)$ around a point $q\in\m^n$, which are well defined for $0<t\leq r_{c,n}(H_0)$, we can consider the new immersion
$$\psi_2(x,t)=(x,t,h(t))\in\m^n \times \r.$$

Applying the same process for the upper part of $S$, we obtain a sphere $S^{\ast}$ in $\m^n \times \r$ which is a bigraph over the geodesic ball of radius $r_{c,n}(H_0)$ centered at $q$.

We remark that $S^{\ast}$ is symmetric with respect to a horizontal slice as $S$, and any vertical translation of $S^{\ast}$ is congruent to $S^{\ast}$. However, $S^{\ast}$ depends strongly on the point $q\in\m^n$, i. e. if we start with another point $\widetilde{q}\in\m^n$ and obtain a new surface $\widetilde{S^{\ast}}$ following the same process then $S^{\ast}$ and $\widetilde{S^{\ast}}$ are not isometric in general. If the Ricci curvature in the radial directions on $B_r(q) \subset \m$ are greater than or equal to $c$, for all the geodesics $\gamma(t)$ in $\m$ emanating from $q$, using Theorem 2, we have that the mean curvature of $S^{\ast}$ satisfies $H(S^{\ast}) \leq H_0$.
With all of this we obtain

\begin{teo}\label{t3}
Let $B_r$ be a closed geodesic ball of radius $r>0$ in an n-dimensional Riemannian manifold $\m^n$, and $c$ the minimum of the Ricci curvature in the radial directions of unit vectors $\gamma'(t)$ on $B_r$, for all the geodesics $\gamma(t)$ in $\m^n$ emanating from the center of $B_r$. Consider $H_0>0$ such that $r_{c,n}(H_0) = r$. Then, there is no vertical graph over $B_r$ with minimum of its mean curvature satisfying $min(H)\geq H_0$.
\end{teo}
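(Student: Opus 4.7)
The plan is a sliding-barrier argument: I will translate the model sphere $S^\ast$ constructed in the paragraph preceding the theorem vertically against a hypothetical graph $G$ and apply a maximum principle at the first point of interior tangency. Assume for contradiction that $G=\{(x,u(x)):x\in\overline{B_r}\}$ is a vertical graph in $\m^n\times\r$ with $\min H(G)\geq H_0$ (computed with upward-pointing normal), where $u$ is smooth in the interior and $C^1$ up to $\partial B_r$. Writing $v:\overline{B_r}\to\r$ for the height function of the lower hemisphere of $S^\ast$, centered at the center $q$ of $B_r$, the construction gives $v<v_0$ on $\mathrm{int}(B_r)$ and $v=v_0$ on $\partial B_r$, with the graph of $v$ having a vertical tangent plane along the whole boundary. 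Theorem \ref{t2} then yields $H(v)\leq H_0$ pointwise, where $H(v)$ is the mean curvature of the graph with upward-pointing normal.

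Let $\tau^\ast=\max_{\overline{B_r}}(u-v)$, which exists and is attained at some $x^\ast\in\overline{B_r}$ by compactness; by construction $v+\tau^\ast\geq u$ with equality at $x^\ast$. The crucial step is to rule out $x^\ast\in\partial B_r$. Indeed, the profile $h(t)$ of the CMC-$H_0$ rotational sphere in $\m^n(c)\times\r$ satisfies $h'(r)=+\infty$ with the standard asymptotics $h(r)-h(r-s)\sim C\sqrt{s}$ as $s\to 0^+$ (from the profile ODE at the vertical-tangent radius). Consequently $v_0-v(x)\sim C\sqrt{d(x,\partial B_r)}$ for $x$ tending to $x^\ast$ along the inward radial geodesic, while the $C^1$-regularity of $u$ up to the boundary gives $u(x^\ast)-u(x)=O(d(x,\partial B_r))$. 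Thus, for such $x$ close to $x^\ast$,
\[u(x)-v(x)=\tau^\ast+C\sqrt{d(x,\partial B_r)}-O(d(x,\partial B_r))>\tau^\ast,\]
contradicting the maximality of $\tau^\ast$. Hence $x^\ast\in\mathrm{int}(B_r)$.

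At this interior tangency, $u\leq v+\tau^\ast$ near $x^\ast$ with equality at $x^\ast$, both graphs are smooth there with the same upward-pointing normal, and the quasilinear elliptic tangency principle for the mean-curvature operator (the upper graph has the greater mean curvature at the tangent point) gives $H(G)(x^\ast)\leq H(v)(x^\ast)$. Combined with $H(G)\geq H_0\geq H(v)$ pointwise, this forces $H(G)(x^\ast)=H(v)(x^\ast)=H_0$. Moreover, globally $M[v+\tau^\ast]\leq nH_0\leq M[u]$ (where $M$ denotes the mean-curvature graph operator on $\m^n$), so $w=v+\tau^\ast-u\geq 0$ satisfies a linear elliptic differential inequality obtained by linearizing $M$ along the segment from $u$ to $v+\tau^\ast$, with $w(x^\ast)=0$ an interior zero. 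The strong maximum principle then yields $w\equiv 0$ on the connected set $\mathrm{int}(B_r)$, and hence on $\overline{B_r}$ by continuity, which is absurd because $u$ has bounded gradient while $v$ has infinite gradient along $\partial B_r$.

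The main obstacle is the boundary step, which relies on identifying the $\sqrt{d}$ blow-up rate of the CMC profile function at the vertical-tangent radius; everything else is standard barrier and comparison-principle machinery for the quasilinear mean-curvature equation.
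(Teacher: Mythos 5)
Your proposal is correct and follows essentially the same strategy as the paper's proof: vertically slide the model hypersurface $S^\ast$ (constructed from the rotational CMC sphere in $\m^n(c)\times\r$ via Theorem~\ref{t2}) against the hypothetical graph and invoke the tangency/strong maximum principle at the first touching point. The paper states the boundary case only implicitly, via the observation that $S^\ast$ fails to be a strict graph over $\partial B_r$; your $\sqrt{d}$-asymptotics argument for the CMC profile is the quantitative version of that remark and makes the first-contact-is-interior step fully rigorous, which is a useful elaboration rather than a different route.
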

\begin{proof}Assume $\Sigma$ is a graph over $B_r$ with $min(H)\geq H_0$ for a unit normal $N$. Without loss of generality, we  assume that the unit normal $N$ points upwards.

Let $q\in\m^n$ be the center of the geodesic disk $B_r$ and consider the sphere $S^{\ast}$ centered at $q$ previously obtained, which has mean curvature smaller than or equal to $H_0$ for its inner normal.

Move the sphere $S^{\ast}$ up until $\Sigma$ is below $S^{\ast}$, and go down until $S^{\ast}$ intersects $\Sigma$ for the first time. Then, the classical maximum principle for mean curvature asserts that both surfaces must agree locally. In particular, $\Sigma$ and $S^{\ast}$ have constant mean curvature $H_0$ and $\Sigma$ agrees with the lower hemisphere of $S^{\ast}$. However, this is a contradiction because $S^{\ast}$ is not a strict graph over the boundary of $B_r$ since its unit normal is horizontal at those points.
\end{proof}

\begin{teo}\label{t4}
Let $B_r$ be a closed geodesic ball of radius $r>0$ in an n-dimensional Riemannian manifold $\m^n$, $c:= min \{ K_p (\pi) : \partial_t \in \pi, p \in B_r \}$ be the minimum of the radial sectional curvatures on $B_r$, and $K_0>0$ such that $r_{c,n}^*(K_0) = r$. Then, there is no vertical graph over $B_r$ with minimum of its Gauss-Kronecker curvature satisfying $min(K)\geq K_0$ and a point with definite second fundamental form.
\end{teo}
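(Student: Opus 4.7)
The plan is to mirror the proof of Theorem \ref{t3}, with Theorem \ref{t1} producing the comparison sphere and the Monge-Amp\`ere maximum principle replacing the mean-curvature one. Since $r_{c,n}^*(K_0)=r$, take the rotational topological sphere $S\subset\m^n(c)\times\r$ of constant Gauss-Kronecker curvature $K_0$; it is strictly convex, and its lower hemisphere is a graph $\psi_1(x,t)=(x,t,h(t))$ in polar coordinates around a point $p\in\m^n(c)$, for a strictly increasing profile $h$ with $h'(0)=0$ and $h'(t)\to\infty$ as $t\to r$. Let $q$ be the center of $B_r\subset\m^n$ and define $\psi_2(x,t)=(x,t,h(t))$ in polar coordinates around $q$, reflecting through the top slice to obtain a closed bigraph $S^\ast\subset\m^n\times\r$ over $B_r$. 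Theorem \ref{t1} (case 1, with $\m_1=\m^n(c)$ and $\m_2=\m^n$) applies because the defining inequality for $c$ is exactly the required sectional-curvature hypothesis $K^1\leq K^2$, and yields that every principal curvature of $\psi_1$ dominates every principal curvature of $\psi_2$ at corresponding points. Since all principal curvatures of $S$ are positive, this forces $K(S^\ast)\leq K_0$ on the open set where $S^\ast$ is locally convex.

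Argue by contradiction: suppose $\Sigma$ is a vertical graph over $B_r$ with $\min(K)\geq K_0$ and one point where $II_\Sigma$ is definite. Since $K>0$ never vanishes on $\Sigma$, $II_\Sigma$ is nondegenerate throughout, and by continuity of its signature on the connected set $\Sigma$ it is definite everywhere; choosing the upward unit normal we may assume $II_\Sigma>0$. Translate $S^\ast$ vertically upward until it lies strictly above $\Sigma$, then lower it until the first contact. Since $u_{S^\ast}$ has vertical tangent along $\partial B_r$ while $u_\Sigma$ is smooth with bounded gradient, a boundary contact would force $u_{S^\ast}<u_\Sigma$ just inside $\partial B_r$, contradicting the sliding position; hence the first contact point $p^\ast$ is interior, with $u_{S^\ast}\geq u_\Sigma$ locally, common tangent plane, and common upward unit normal.

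At $p^\ast$ the nonnegative function $u_{S^\ast}-u_\Sigma$ vanishes with zero gradient and positive semidefinite Hessian, so $II_{S^\ast}\geq II_\Sigma>0$ at $p^\ast$; by continuity $S^\ast$ is locally convex near $p^\ast$, and there Theorem \ref{t1} gives $K(S^\ast)\leq K_0\leq K(\Sigma)$. The Gauss-Kronecker equation is of Monge-Amp\`ere type and uniformly elliptic on locally convex graphs, so linearizing along the segment from $u_\Sigma$ to $u_{S^\ast}$ yields a linear elliptic operator $L$ satisfying $L(u_{S^\ast}-u_\Sigma)=K(S^\ast)-K(\Sigma)\leq 0$. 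The strong minimum principle forces $u_{S^\ast}\equiv u_\Sigma$ near $p^\ast$, and a standard open-and-closed argument propagates this coincidence to the entire lower hemisphere of $S^\ast$. But that hemisphere meets $\partial B_r$ with vertical tangent planes, whereas $\Sigma$ is a vertical graph over $B_r$ with bounded gradient, a contradiction.

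The main obstacle is securing the ellipticity of the Monge-Amp\`ere comparison: unlike the mean-curvature equation in Theorem \ref{t3}, one needs local convexity of both surfaces at the contact. The tangency-from-above inequality $II_{S^\ast}\geq II_\Sigma>0$ at $p^\ast$ is precisely what transfers the hypothesized convexity of $\Sigma$ to $S^\ast$, making the Monge-Amp\`ere maximum principle available; this also explains why the assumption ``a point with definite second fundamental form'' on $\Sigma$ is needed here, even though no analogous hypothesis was required in Theorem \ref{t3}.
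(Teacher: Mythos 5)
Your proof is correct and follows the same route as the paper's, which is stated in a single sentence: repeat the sliding argument of Theorem \ref{t3} with the rotational sphere of constant Gauss-Kronecker curvature in $\m^n(c)\times\r$ in place of the constant-mean-curvature sphere, invoke Theorem \ref{t1} for the principal-curvature comparison, and apply the (Monge-Amp\`ere) maximum principle, the hypothesis of a point with definite second fundamental form being exactly what makes this maximum principle available. Your write-up supplies the details the paper leaves implicit --- the continuity-of-signature argument giving global definiteness of $II_\Sigma$, the transfer of local convexity to $S^{\ast}$ at the contact point via the tangency inequality $II_{S^{\ast}}\geq II_\Sigma>0$, and the open-and-closed propagation of the coincidence set --- and these are precisely what the paper's one-line proof is gesturing at.
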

\begin{proof}
The proof follows the same process that in Theorem \ref{t3}, taking now a sphere with constant Gauss-Kronecker curvature in $\m^n(c) \times \r$ (see, for instance, \cite{EGR,ES}), and using Theorem \ref{t1}. The requirement of the graph of having a point with definite second fundamental form is now needed for using the maximum principle. 
\end{proof}

\begin{no}
It should be observed that a similar result to Theorem \ref{t4} is possible for any r-mean curvature $H_r$, with $2\leq r\leq n$, and not only for the Gauss-Kronecker curvature $H_n$.
\end{no}

\begin{teo}\label{t5}
Let $\m^n$ be a complete, simply connected Riemannian manifold with injectivity radius $i>0$ and $c \in \r$ the infimum of its Ricci curvature on $\m^n$. Consider a properly embedded hypersurface $\Sigma$ in $\m^n \times \r$ with mean curvature $H \geq H_0 >0$, ($H_0 > s_{c,n}$ if $c<0$). If $r_{c,n}(H_0)<i$ then the mean convex component of $\Sigma$ cannot contain a closed geodesic ball in $\m^n \times \r$ of radius greater than or equal to the extrinsic semi-diameter of a sphere with constant mean curvature $H_0$ in $\m^n(c) \times \r$.
\end{teo}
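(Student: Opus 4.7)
The plan is to argue by contradiction using a sliding comparison sphere and the geometric maximum principle, generalizing the strategy of Theorem \ref{t3}. Let $\rho$ denote the extrinsic semi-diameter of the model sphere $S$ of constant mean curvature $H_0$ in $\m^n(c)\times\r$, and suppose that the mean convex component $\Omega$ of $\Sigma$ contains a closed geodesic ball $\overline{B}_r(p_0)\subset\Omega$ with $r\geq\rho$. First I would produce the comparison sphere $S^{\ast}=S^{\ast}(p_0)$ centered at $p_0$, exactly as in the construction preceding Theorem \ref{t3}; the hypothesis $r_{c,n}(H_0)<i$ guarantees that polar coordinates on $\m^n$ are defined on every geodesic disk of radius $r_{c,n}(H_0)$, and Theorem \ref{t2} applied with the Ricci infimum $c$ gives $H(S^{\ast})\leq H_0$ with respect to the outward normal. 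Since $S^{\ast}$ shares the same height profile $h(t)$ and the same horizontal radii as $S$, its extrinsic semi-diameter in $\m^n\times\r$ is again $\rho$, so $S^{\ast}\subset\overline{B}_\rho(p_0)\subset\overline{B}_r(p_0)\subset\Omega$.

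Next I would slide $S^{\ast}$ toward $\Sigma$. By proper embeddedness the function $d_{\m^n\times\r}(p_0,\cdot)$ attains its infimum on $\Sigma$ at some $q$, with $L:=d(p_0,q)>r\geq\rho$. Along the minimizing geodesic $\gamma:[0,L]\to\m^n\times\r$ from $p_0$ to $q$, for each $\tau$ I build a comparison sphere $S^{\ast}_\tau$ centered at $\gamma(\tau)$; each such construction is available thanks to $r_{c,n}(H_0)<i$, and each $S^{\ast}_\tau$ still satisfies $H(S^{\ast}_\tau)\leq H_0$ by Theorem \ref{t2}. Setting $\tau^{\ast}:=\sup\{\tau\in[0,L]:S^{\ast}_{\tau'}\subset\Omega\text{ for every }\tau'\leq\tau\}$, one has $\tau^{\ast}>0$ because $S^{\ast}_0$ lies in the interior of $\overline{B}_r(p_0)$, while $\tau^{\ast}<L$ because $\gamma$ meets $\Sigma$ perpendicularly at $q$, so for $\tau$ close to $L$ the point of $S^{\ast}_\tau$ extending from $\gamma(\tau)$ in the direction $\gamma'(L)=N_\Sigma(q)$ already lies on the non-mean-convex side of $\Sigma$. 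By continuity, $S^{\ast}_{\tau^{\ast}}$ sits in $\overline{\Omega}$ and is internally tangent to $\Sigma$ at some point $p^{\ast}$ with aligned outward normals.

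Since $H(S^{\ast}_{\tau^{\ast}})\leq H_0\leq H(\Sigma)$ at $p^{\ast}$ and $S^{\ast}_{\tau^{\ast}}$ lies on the side of $\Sigma$ opposite $N_\Sigma$, the strong geometric maximum principle forces local coincidence, and propagation along the connected $\Sigma$ yields $\Sigma=S^{\ast}_{\tau^{\ast}}$ as sets. Then $\Omega$ equals the bounded interior of $S^{\ast}_{\tau^{\ast}}$, which is contained in $\overline{B}_\rho(\gamma(\tau^{\ast}))$; combining $\overline{B}_r(p_0)\subset\overline{B}_\rho(\gamma(\tau^{\ast}))$ with $r\geq\rho$ and extending the geodesic from $\gamma(\tau^{\ast})$ through $p_0$ within the injectivity radius gives a point of $\overline{B}_r(p_0)$ at ambient distance $d(\gamma(\tau^{\ast}),p_0)+r$ from $\gamma(\tau^{\ast})$, which must be $\leq\rho$; with $r\geq\rho$ this forces $p_0=\gamma(\tau^{\ast})$, hence $\tau^{\ast}=0$, contradicting $\tau^{\ast}>0$. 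The main obstacle I anticipate is the sliding step: each $S^{\ast}_\tau$ is a freshly constructed comparison sphere rather than an isometric translate of $S^{\ast}_0$, so I must verify that $\tau\mapsto S^{\ast}_\tau$ depends continuously on $\tau$ and that the perpendicularity of $\gamma$ at $q$ indeed forces first contact strictly before $\tau=L$; everything else reduces to Theorem \ref{t2}, the classical maximum principle, and Riemannian comparison within the injectivity radius.
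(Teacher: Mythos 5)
Your overall strategy --- build the comparison sphere $S^{\ast}$ via the construction preceding Theorem~\ref{t3}, invoke Theorem~\ref{t2} to get $H(S^{\ast})\leq H_0$, slide the (non-isometric) family $S^{\ast}_\tau$ into first tangency with $\Sigma$, and apply the strong maximum principle --- is the same one the paper uses (the paper simply defers to the analogous argument of \cite{GL}, Theorem~2, substituting Theorem~\ref{t2}). So the approach is right, and the issues you flag yourself (continuity of $\tau\mapsto S^{\ast}_\tau$, the strict inequality $\tau^{\ast}<L$) are genuine but routine technicalities.

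The one step that does not hold up as written is the final contradiction after the maximum principle forces $\Sigma=S^{\ast}_{\tau^{\ast}}$. You claim that extending the geodesic from $\gamma(\tau^{\ast})$ through $p_0$ by a further length $r$ produces a point of $\overline{B}_r(p_0)$ at ambient distance exactly $d(\gamma(\tau^{\ast}),p_0)+r$ from $\gamma(\tau^{\ast})$. That requires the extended geodesic, of total length $\tau^{\ast}+r$, to remain minimizing from $\gamma(\tau^{\ast})$. The hypotheses only give $r_{c,n}(H_0)<i$ (and hence that each $S^{\ast}_\tau$ is well defined), not $\tau^{\ast}+r<i$; from $p_0\in B_\rho(\gamma(\tau^{\ast}))$ one only gets $\tau^{\ast}<\rho\leq r$, so $\tau^{\ast}+r<2r$, which need not be below $i$. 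As stated the step can fail, and you cannot conclude $d(\gamma(\tau^{\ast}),p_0)+r\leq\rho$.

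A clean fix that avoids injectivity-radius issues entirely is to bound $d(p_0,\Sigma)$ from above using the \emph{vertical} geometry of the model sphere. Once $\Sigma=S^{\ast}_{\tau^{\ast}}$ and $\Omega$ is its bounded interior, every $p\in\Omega$ lies on a vertical geodesic that exits $\Omega$ through either the upper or lower graph of $S^{\ast}_{\tau^{\ast}}$ within distance $h_{\mathrm{mid}}$ (the half--height of the bigraph), and $h_{\mathrm{mid}}\leq\rho$ since the poles of the sphere lie at distance $h_{\mathrm{mid}}$ from the center. Hence $d(p_0,\Sigma)\leq\rho$. But $\overline{B}_r(p_0)\subset\Omega$ open gives $d(p_0,\Sigma)>r\geq\rho$, the desired contradiction. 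I would replace your last paragraph with this argument; the rest of the outline is sound modulo the continuity bookkeeping you already identified.
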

\begin{proof} The proof is a consequence of the maximum principle and follows the same process that \cite[Theorem\,2]{GL}, using now Theorem \ref{t2}.
\end{proof}

Also observe that a weaker version of Theorem \ref{t5} is possible for the r-mean curvatures $H_r$, $2\leq r\leq n$, using Theorem \ref{t1}.

Let $\m^n$ be a Hadamard manifold, that is, a complete simply connected Riemannian manifold with non-positive sectional curvature. Since its injectivity radius is $i=\infty$, we obtain as a consequence of the previous result:

\begin{cor} \label{c1}
Let $H_0>0$ and $\m^n$ be a Hadamard manifold with infimum of its Ricci curvature $c>-\infty$. Then, there exists no entire horizontal graph in $\m^n \times \r$ with mean curvature $H \geq H_0 > s_{c,n}$.
\end{cor}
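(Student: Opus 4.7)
The strategy is to argue by contradiction using Theorem \ref{t5}. Suppose $\Sigma=\{(p,u(p)):p\in\m^n\}$ is an entire horizontal graph in $\m^n\times\r$ with $H\ge H_0>s_{c,n}$. The hypotheses of Theorem \ref{t5} other than the injectivity radius condition are in place by assumption: $\Sigma$ is properly embedded, the Ricci lower bound $c>-\infty$ is given, and $H\ge H_0>0$. Because $\m^n$ is Hadamard its injectivity radius is $i=\infty$, so the remaining hypothesis $r_{c,n}(H_0)<i$ holds vacuously.

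The content of the proof is then the verification that the mean convex side of $\Sigma$ contains closed geodesic balls of arbitrarily large radius, which directly contradicts the conclusion of Theorem \ref{t5}. The graph $\Sigma$ separates $\m^n\times\r$ into the two components $\Omega^{\pm}=\{(p,t):\pm(t-u(p))>0\}$, and the mean convex component must be one of these. Fix any $R>0$ and any $p_0\in\m^n$; by compactness, the continuous function $u$ attains a maximum $M^+$ and a minimum $M^-$ on the closed geodesic ball of radius $R$ around $p_0$ in $\m^n$. Setting $t_0:=M^++R+1$ and using that the projections of the product $\m^n\times\r$ onto its two factors are $1$-Lipschitz, the closed geodesic ball of $\m^n\times\r$ of radius $R$ centered at $(p_0,t_0)$ is contained in the cylinder over the horizontal closed ball of radius $R$ at $p_0$ and in the vertical slab $[t_0-R,t_0+R]$; since $t_0-R>M^+\ge u(p)$ on that horizontal ball, this geodesic ball lies inside $\Omega^+$. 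The symmetric choice $t_0:=M^--R-1$ places a closed geodesic ball of radius $R$ inside $\Omega^-$. Hence both components admit arbitrarily large geodesic balls.

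Choosing $R$ strictly greater than the extrinsic semi-diameter of the constant mean curvature $H_0$ sphere in $\m^n(c)\times\r$ then produces the desired contradiction with Theorem \ref{t5}, completing the proof. The main conceptual observation is that an entire horizontal graph automatically leaves unbounded vertical room on each of its two sides, so both complementary components are ``fat enough'' to contain arbitrarily large balls once one exploits the $1$-Lipschitz property of the coordinate projections in a Riemannian product. The genuinely hard step (showing the mean convex side cannot contain a ball of the critical radius) is entirely absorbed by Theorem \ref{t5}, so there is no serious obstacle beyond this elementary geometric remark.
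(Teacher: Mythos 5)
Your proposal is correct and follows essentially the same route as the paper, which presents Corollary \ref{c1} as an immediate consequence of Theorem \ref{t5} once one observes that a Hadamard manifold has injectivity radius $i=\infty$. You have simply made explicit the step the paper leaves implicit: both complementary components of an entire graph over $\m^n$ contain arbitrarily large closed geodesic balls (placed far above or far below the bounded portion of the graph over a compact ball in $\m^n$), so whichever component is mean convex must contain one of the forbidden radius, contradicting Theorem \ref{t5}.
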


Let us denote by $S_{\frac{n-1}{n}}$ the simply connected rotational entire vertical graph with constant mean curvature $H=\frac{n-1}{n}$ in $\h^n \times \r$. This graph has been described in \cite{BE2}. Again, as a consequence of our comparison results, if we consider the corresponding entire vertical graph $S_{\frac{n-1}{n}}^{\ast}$ in $\m^n\times\r$, we have:

\begin{cor} \label{c2}
Let $\m^n$ be an n-dimensional Hadamard manifold with Ricci curvature smaller than or equal to $-1$. Assume $\Sigma$ is an immersed hypersurface in $\m^n \times \r$ with mean curvature $H \leq \frac{n-1}{n}$ and cylindrically bounded vertical ends. Then $\Sigma$ must have more than one end.
\end{cor}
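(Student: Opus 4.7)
The plan is to use the entire vertical graph $S^*_{\frac{n-1}{n}}\subset\m^n\times\r$ (introduced in the paragraph preceding the statement) as a moving barrier. By Theorem \ref{t2} applied with $\m_1=\h^n$ and $\m_2=\m^n$, together with the hypothesis that the Ricci curvature of $\m^n$ is bounded above by that of $\h^n$, the graph $S^*_{\frac{n-1}{n}}$ has mean curvature at least $\frac{n-1}{n}$ with respect to its upward unit normal at every point.

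Argue by contradiction and assume $\Sigma$ is connected with exactly one end. Since the end is vertical and cylindrically bounded, it limits to either $h=+\infty$ or $h=-\infty$; after reflecting across a horizontal slice if necessary, which preserves the bound on $|H|$, one may assume the end lies at $h=+\infty$. Consequently there is a compact $K\subset\m^n$ such that $\Sigma\subset K\times\r$, and $\Sigma$ is bounded below in its vertical coordinate.

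Now slide $S^*_{\frac{n-1}{n}}$ vertically, writing $P_t=S^*_{\frac{n-1}{n}}+t\,\partial_h$. For $t$ sufficiently negative the graph $P_t$ restricted to $K$ sits strictly below $\Sigma$; increase $t$ continuously and let $t^*$ be the first value for which $P_t$ meets $\Sigma$. Because $\Sigma\subset K\times\r$ while $P_t$ is an entire graph over $\m^n$, the first contact point $p$ is interior to both hypersurfaces, they are tangent there, and $\Sigma$ lies locally above $P_{t^*}$. Choosing the upward unit normal on both, the standard graph comparison gives $H(\Sigma)(p)\geq H(P_{t^*})(p)\geq\frac{n-1}{n}$, while the hypothesis $H\leq\frac{n-1}{n}$ forces equality. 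The strong maximum principle for the prescribed mean curvature operator then yields that $\Sigma$ agrees with $P_{t^*}$ on a neighborhood of $p$, and by a standard open-closed argument the component of $\Sigma$ through $p$ coincides with $P_{t^*}$. Since $P_{t^*}$ is an entire graph over $\m^n$, this contradicts $\Sigma\subset K\times\r$.

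The main obstacle is the careful treatment of orientations at the contact point: one must arrange that the inequality $H\leq\frac{n-1}{n}$ is applied with respect to the upward normal at $p$, which is precisely why the reduction to an end at $h=+\infty$ via horizontal reflection is essential. In parallel, one has to check that the first contact actually occurs at a finite interior tangential point rather than escape along the end at $+\infty$; this relies on the fact that $\Sigma$ has a finite vertical lower bound while the heights of $P_t$ on $K$ translate uniformly with $t$, so that the touching time $t^*$ is finite and the touching point is confined to a compact region of $K\times\r$.
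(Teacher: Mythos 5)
Your barrier-sliding outline is evidently the argument the paper has in mind (the paper itself offers no proof of Corollary~\ref{c2} beyond calling it a consequence of the comparison results), and your treatment of the first tangential contact and its confinement to a compact part of $K\times\r$ is fine. However, the single step on which the whole argument rests --- that $S^*_{\frac{n-1}{n}}$ has mean curvature at least $\tfrac{n-1}{n}$ in $\m^n\times\r$ --- does not follow from Theorem~\ref{t2} in the way you invoke it. With $\m_1=\h^n$ and $\m_2=\m^n$, Theorem~\ref{t2} assumes $Ric^1\leq Ric^2$ and concludes $H_1\geq H_2$, i.e.\ the graph in $\m_1\times\r$ has the \emph{larger} mean curvature. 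You want the graph in $\m_2\times\r$ to have larger mean curvature, and your Ricci hypothesis points the other way, $Ric^{\m^n}\leq Ric^{\h^n}$, so the hypothesis of Theorem~\ref{t2} is not met. Swapping roles to $\m_1=\m^n$, $\m_2=\h^n$ fixes the direction of the Ricci inequality but violates the structural requirement that $\m_1$ be a model manifold --- and an inspection of the proof of Theorem~\ref{t2} shows that this requirement and the direction of the implication cannot be decoupled: the argument transports Jacobi fields out of the model factor (where they are isotropic, so their curvature sum traces to a Ricci term) and then applies the Jacobi minimizing property in the other factor, which only ever yields $\sum\mathcal{I}_2\leq\sum\mathcal{I}_1$. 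So the key inequality $H(S^*)\geq\tfrac{n-1}{n}$ is not available under a Ricci upper bound. What does give it is Theorem~\ref{t1} under the stronger hypothesis that the sectional curvature of $\m^n$ is at most $-1$; this is precisely the comparison the paper itself uses in the analogous step of the proof of Theorem~\ref{t7}, and it strongly suggests that ``Ricci curvature'' in Corollary~\ref{c2} should read ``sectional curvature''. Under that reading your argument goes through; as written it has a genuine gap.

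A smaller issue: the hypothesis ``$H\leq\tfrac{n-1}{n}$'' is orientation-dependent, and at the contact point the comparison principle only gives $H_\Sigma(p)\geq\tfrac{n-1}{n}$ for the \emph{upward} normal, which contradicts the hypothesis only if that hypothesis is taken with respect to the upward normal at $p$. Reflecting $\Sigma$ across a horizontal slice changes the sign of $H$, so ``preserves the bound on $|H|$'' is silently re-reading the hypothesis as $|H|\leq\tfrac{n-1}{n}$. That is the natural, orientation-free reading (and makes the reflection unnecessary: at the contact $|H_\Sigma(p)|\geq\tfrac{n-1}{n}$ regardless of which side the chosen normal of $\Sigma$ points), but it should be said explicitly rather than left implicit.
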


We obtain now a generalization to $\m^n\times\r$ of a theorem proven in \cite{BE2} for the product space $\h^n\times\r$.

\begin{teo}\label{t7}
Let $\m^n$ be an n-dimensional Hadamard manifold with sectional curvature pinched between  $-c^2$ and $-1$, for a constant $c \geq 1$. Let $\Omega$ be a bounded domain in $\m^n \times \{0\}$, with boundary given by a compact embedded hypersurface  $\Gamma$. Assume all the principal curvatures of $\Gamma$ are greater than $c$, then for any $H_0 \in [0, \frac{n-1}{n}]$ there exists a graph $h$ over $\Omega$ with constant mean curvature $H_0$ and zero boundary data.

Moreover, if $\Sigma$ is a compact hypersurface immersed in $\m^n \times \r$ with boundary $\Gamma$ and constant mean curvature $H_0$ then, up to a symmetry with respect to $\m^n \times \{0\}$, $\Sigma$ agrees with the previous graph.
\end{teo}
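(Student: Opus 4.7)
The plan is to solve the Dirichlet problem
\begin{equation*}
\text{div}\left(\frac{\nabla u}{\sqrt{1+|\nabla u|^2}}\right)=nH_0 \text{ on } \Omega,\qquad u_{|\Gamma}=0,
\end{equation*}
by the standard continuity method: consider the family of problems parametrized by $H\in[0,H_0]$, observe that $H=0$ admits a solution (the minimal graph exists because $\Gamma$ is mean-convex and $\Omega$ lies in a Hadamard manifold), show that the set of $H$'s for which the problem is solvable is open by the implicit function theorem/Schauder theory, and establish uniform $C^{2,\alpha}(\bar\Omega)$ a priori estimates to obtain closedness. As usual, the Ladyzhenskaya--Ural'tseva machinery reduces the a priori estimates to two ingredients: a height bound and a boundary gradient bound.

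For the height bound I would use Theorem \ref{t2}. Since the sectional curvature of $\m^n$ is at most $-1$, the Ricci curvature of $\m^n$ in any unit direction is bounded above by $-(n-1)$, which is exactly the Ricci curvature of $\h^n$. Thus starting from the BE2 entire rotational graph $S_{(n-1)/n}$ in $\h^n\times\r$ with constant mean curvature $\frac{n-1}{n}$, Theorem \ref{t2} produces a corresponding entire rotational graph $S^{\ast}_{(n-1)/n}$ over each point $q\in\m^n$ in $\m^n\times\r$ whose mean curvature is at most $\frac{n-1}{n}\geq H_0$. Sliding a vertical translate of $S^{\ast}_{(n-1)/n}$ downward until first contact with the candidate graph and applying the interior maximum principle gives an upper barrier, hence a uniform height estimate.

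For the boundary gradient estimate, which I expect to be the main obstacle, I would build local barriers near each point $p\in\Gamma$ using Theorem \ref{t1}. Since the sectional curvature of $\m^n$ is at least $-c^{2}$, Theorem \ref{t1} lets me transplant into $\m^n\times\r$ the vertical CMC graph over a geodesic ball of radius $r_{-c^{2},n}(H_0)$ in $\m^n(-c^{2})\times\r$ and obtain a hypersurface whose principal curvatures dominate those of the model. The hypothesis that every principal curvature of $\Gamma$ is strictly larger than $c$ is precisely what is needed to place a piece of such a transplanted CMC graph tangent to $\Gamma$ from outside and lying above $u$ in a neighborhood of $p$: indeed, the geodesic spheres of $\m^n(-c^{2})$ have principal curvatures $c\coth(cr)>c$, so a well-chosen tubular model around a hypersurface with principal curvatures $>c$ fits. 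This gives a uniform bound on $|\nabla u|$ on $\Gamma$, and the interior gradient estimate propagates it to $\bar\Omega$ via the standard maximum principle for the gradient of CMC graphs.

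For the uniqueness statement, let $\Sigma$ be any compact hypersurface with boundary $\Gamma$ and constant mean curvature $H_0$. Let $G$ denote the graph built above and $G^{\star}$ its reflection across $\m^n\times\{0\}$, so that $G\cup G^{\star}$ is a topological sphere with constant mean curvature $H_0$ and a horizontal tangent plane along $\Gamma$. Using vertical translates of $G\cup G^{\star}$ as a family of barriers and starting from a position in which $\Sigma$ is disjoint and below, I would slide the barrier downward until first contact and apply the interior and boundary maximum principles: at contact, $\Sigma$ must coincide with the touching hemisphere, and a standard continuation argument along $\Sigma$ forces $\Sigma=G$ or $\Sigma=G^{\star}$. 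Since $\Sigma$ has the same boundary $\Gamma$ on the slice, the horizontal tangency of $G\cup G^{\star}$ along $\Gamma$ is what allows the boundary maximum principle to close the argument, mirroring the proof in $\h^n\times\r$ from \cite{BE2}.
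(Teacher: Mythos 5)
The overall architecture (height estimate plus boundary gradient estimate, then Spruck and classical elliptic theory, then Alexandrov-type sliding for uniqueness) is the same as in the paper, but three of the key steps as you describe them are either inapplicable or go in the wrong direction, and they are exactly where the real work lies.

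First, the height estimate cannot come from Theorem~\ref{t2} in the way you apply it. Theorem~\ref{t2} requires the comparison manifold $\m_1$ to be the rotationally symmetric model, so here you must put $\m_1=\h^n$ and $\m_2=\m^n$; its hypothesis is then $Ric^{\h^n}\leq Ric^{\m^n}$, i.e.\ $-(n-1)\leq Ric^{\m^n}$, which is the \emph{opposite} of what the sectional pinching $K\leq -1$ gives you. Theorem~\ref{t2} simply does not apply, and even its would-be conclusion $H(S^\ast)\leq\frac{n-1}{n}$ is useless for an upper barrier: to slide $S^\ast$ down onto the solution and get a contradiction by the strong maximum principle you need $H(S^\ast)\geq H_0$, not $\leq$. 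The paper gets the correct inequality from Theorem~\ref{t1} applied with $\m_1=\m^n$, $\m_2=\h^n$, $K^{\m^n}\leq -1 = K^{\h^n}$, which yields $II(S^\ast)\geq II(S)$ and hence $H(S^\ast)\geq H_0$; and it uses the entire rotational CMC-$H_0$ graph from \cite{BE2}, not the particular $H=(n-1)/n$ one.

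Second, the boundary gradient barrier you propose does not exist and the comparison again goes the wrong way. The compact CMC sphere of mean curvature $H_0$ in $\m^n(-c^2)\times\r$ only exists for $H_0>s_{-c^2,n}=\frac{n-1}{n}c$; for $c>1$ and $H_0\leq\frac{n-1}{n}$ there is no such sphere, so $r_{-c^2,n}(H_0)$ is undefined. Even setting this aside, Theorem~\ref{t1} with $\m_1=\m^n(-c^2)$, $\m_2=\m^n$, $K^1=-c^2\leq K^2$ gives $II_1\geq II_2$, so the transplanted hypersurface in $\m^n\times\r$ has \emph{smaller} principal curvatures than the model, the reverse of what you assert. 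In the paper, the only transplant is from $\h^n\times\r$; the lower pinching $K\geq -c^2$ is used for a different purpose: together with the hypothesis that the principal curvatures of $\Gamma$ exceed $c$, it controls the principal curvatures of large geodesic spheres of $\m^n$ so that one can prove the Claim that, for every $p\in\Gamma$, there is a geodesic sphere tangent to $\Gamma$ at $p$ from outside whose closed ball contains all of $\Gamma$ (proved by a mountain pass argument on the distance to a fixed center). It is this Claim that lets one position the transplanted entire graph $S^\ast$, with $S^\ast\cap(\m^n\times\{0\})$ equal to that tangent geodesic sphere, so that the same barrier $S^\ast$ and its reflection $\overline{S^\ast}$ give height \emph{and} boundary gradient estimates simultaneously. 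Your local-barrier sketch skips this global positioning step entirely.

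Third, for uniqueness, $G\cup G^\star$ is not a topological sphere with constant mean curvature: the graph $h$ has zero boundary values but no reason to have a vertical tangent plane along $\Gamma$, so $G\cup G^\star$ has a corner along $\Gamma$ and cannot be used as a smooth barrier. The paper instead first shows that any compact CMC-$H_0$ hypersurface $\Sigma$ with boundary $\Gamma$ is trapped in $\overline{\Omega}\times\r$ (using the same $S^\ast$, $\overline{S^\ast}$ barriers), then proves $\Sigma$ is a vertical graph by the Alexandrov reflection with horizontal slices, and finally compares the graph $\Sigma$ with the graph of $h$ (or $-h$) directly by vertical sliding and the boundary maximum principle.
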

\begin{proof}
Observe that $\Omega$ must be a convex bounded domain in $\m^n$ and homeomorphic to a ball, and $\Gamma$ must be homeomorphic to a sphere (see \cite{Alex}). 

Let $m_0$ be the minimum of the principal curvatures of $\Gamma$. Since $m_0>c$ we can take a radius $R_0$ big enough such that for every $R>R_0$ the geodesic spheres of radius $R$ in the hyperbolic space of seccional curvature $-c^2$ have principal curvatures smaller than $m_0$. As the sectional curvature of $\m^n$ is bigger than or equal to $-c^2$, the geodesic spheres in $\m^n$ of radius $R\geq R_0$ have principal curvatures smaller than $m_0$.

Let $p\in\Gamma$ and $\gamma_p(t)$ be the geodesic in $\m^n$ starting at $p$ with initial speed given by the unit normal to $\Gamma$ pointing to $\Omega$. It is clear that the geodesic sphere $S_p(R)\subseteq\m^n$ centered at $\gamma_p(R)$ and radius $R$ is tangent to $\Gamma$ at $p$. Moreover, if $R\geq R_0$ the open geodesic ball bounded by $S_p(R)$ contains a punctured neighborhood of $p\in\Gamma$ because the principal curvatures of $S_p(R)$ are bigger than the principal curvatures of $\Gamma$ at $p$ for the same interior unit normal. 

Let $S_0\subseteq\m^n$ be a geodesic sphere such that $\Gamma$ is contained in the geodesic ball bounded by $S_0$, and the distance from $S_0$ to $\Gamma$ is greater than or equal to  $R_0$. Then, consider the map $G:\Gamma\rightarrow S_0$ defined in the following way: given $p\in\Gamma$ the point $G(p)$ is given by the intersection of the geodesic $\gamma_p(t)$
for $t\geq 0$ with $S_0$. It is well known that the previous intersection is given by a unique point due to the convexity of the geodesic spheres (see, for instance, \cite{Alex}).

Thus, if we denote by $S_p$ the geodesic sphere centered at $G(p)$ passing across $p\in\Gamma$, then we have shown that $S_p$ is tangent to $\Gamma$ at $p$ and a punctured neighborhood of $p$ in $\Gamma$ is contained in the open geodesic ball bounded by $S_p$. In fact, we assert

{\it Claim:} For every $p\in\Gamma$ the closed geodesic ball bounded by $S_p$ contains to $\Gamma$.

Observe that if $G:\Gamma\rightarrow S_0$ is injective then the Claim would be proven. Indeed, if there existed $p_1\in \Gamma$ such that $\Gamma\not\subseteq S_{p_1}$ then  there would be a point $p_2\neq p_1$ such that $d(p_2,G(p_1))\geq d(p,G(p_1))$ for all $p\in\Gamma$. Thus, the geodesic sphere centered at $G(p_1)$ passing across $p_2$ is tangent to $\Gamma$ and contains $\Gamma$ in its interior, and so $G(p_2)=G(p_1)$.

Hence, assume there exist two points $p_1,p_2\in\Gamma$ such that $G(p_1)=G(p_2)$. In such a case, we have shown that $p_1$ and $p_2$ are two strict local maxima for the distance function $\varrho(p)$ from $p\in\Gamma$ to the fixed point $G(p_1)=G(p_2)$. Now, we distinguish two cases depending on the dimension of $\Gamma$:
\begin{enumerate}
\item If dim($\Gamma$)$\geq2$ then we can use the mountain pass lemma for the function $\varrho$ and there must exist a third point $p_3$ which is a saddle point for $\varrho$. Thus, the geodesic sphere $\widetilde{S}_{p_3}$ centered at $G(p_1)$ passing across $p_3$ is tangent to $\Gamma$. Therefore, depending on the orientation of the unit normal to $\Gamma$, we have that $\widetilde{S}_{p_3}=S_{p_3}$ or $\widetilde{S}_{p_3}\cap S_{p_3}=\{p_3\}$. But, this contradicts that $p_3$ is a saddle point, because a punctured neighborhood of $p_3$ is contained in the interior of the geodesic ball bounded by $S_{p_3}$.
\item If dim($\Gamma$)$=1$ then we can consider the two closed arcs $\Gamma_1$ and $\Gamma_2$ of $\Gamma$ joining $p_1$ and $p_2$. Since $p_1$ and $p_2$ are strict local maxima for the function $\varrho$, there must exist $p_3\in\Gamma_1$ and $p_4\in\Gamma_2$ different from $p_1$ and $p_2$ which are local minima for $\varrho$. Assume $\varrho(p_3)\leq \varrho(p_4)$, then from the convexity of $\overline{\Omega}$ the geodesic arc $\Lambda$ joining $p_3$ and $p_4$ is contained in $\overline{\Omega}$. But, $\Lambda\backslash\{p_3,p_4\}$ is contained in the open geodesic ball centered at $G(p_1)$ and radius $\varrho(p_4)$ from the convexity of the geodesic ball. This contradicts that $p_4$ is a minimum for $\varrho$.
\end{enumerate}

Once the Claim is proven, consider  a compact hypersurface $\Sigma$ immersed in $\m^n \times \r$ with boundary $\Gamma$ and constant mean curvature $H_0$. 

Let $S$ be the rotational entire graph with constant mean curvature $H_0$ in $\h^n\times\r$ for its unit normal pointing upwards. Consider a point $p\in\Gamma\subseteq\m^n$ and the associated entire graph $S^{\ast}\subseteq\m^n\times\r$ when we use geodesic polar coordinates at $G(p)\in\m^n$. Up to a vertical translation we can assume that $S^{\ast}\cap\m^n\times\{0\}$ is the geodesic sphere centered at $G(p)$ and containing to $p$ in $\m^n\times\{0\}$. Thus, from the previous Claim, $\Gamma\times\{0\}$ is contained in the closed mean convex component of $S^{\ast}$, and $(p,0)\in S^{\ast}$.

Let us also denote by $\overline{S^{\ast}}$ the reflection of $S^{\ast}$ with respect to the horizontal slice $\m^n\times\{0\}$. The entire graphs $S^{\ast}$ and $\overline{S^{\ast}}$
are congruent, and from Theorem \ref{t1} we obtain that they have mean curvature $H\geq H_0$ for its unit normal pointing to the mean convex component.

As $\Sigma$  is compact we can move vertically $S^{\ast}$ in such a way that $\Sigma$ is completely contained in the mean convex component of $S^{\ast}$. Now, from the maximum principle, if we move back $S^{\ast}$ then the surfaces $\Sigma$ and $S^{\ast}$ do not intersect until $S^{\ast}$ is in its initial position. The same is true for $\overline{S^{\ast}}$.

Therefore, for every $p\in\Gamma$ the hypersurface $\Sigma$ is contained in the compact domain determined by the intersection of the mean convex components of $S^{\ast}$ and $\overline{S^{\ast}}$. In particular, the interior of $\Sigma$ is contained in the solid cylinder $\Omega\times\r$, and if $\Sigma$ was given by the graph of a function $h$ then its height is bounded a priori and so is its gradient at the boundary $\Gamma$.

Now we can prove that there exists a graph $h$ over $\Omega$ with constant mean curvature $H_0 \in [0, \frac{n-1}{n}]$ and zero boundary data. That is, we want to solve the following Dirichlet problem
$$
\left\{\begin{array}{ll}
{\displaystyle div\left(\frac{\nabla h}{\sqrt{1+|\nabla h|^2}}\right)=n\,H_0,}& \qquad\text{in }\Omega\\[2mm]
h=0&\qquad\text{on }\Gamma
\end{array}\right.
$$
where the divergence and gradient $\nabla h$ are taken with respect to the metric
on $\m^n$ (see \cite{Sp}).

We have proven the existence of height estimates and gradient estimates at the boundary. Hence, from \cite{Sp}, we also have global gradient estimates, and the existence of $h$ follows from the classical elliptic theory (see \cite{GT} and \cite{Sp}). 

Finally, we want to show that if $\Sigma$ is a compact hypersurface in $\m^n\times\r$ with constant mean curvature $H_0$ and boundary $\Gamma$, then it is the graph of the previous function $h$ or $-h$.

First, let us observe that $\Sigma$  is a vertical graph. In fact, we have shown that $\Sigma$ is contained in the cylinder $\overline{\Omega}\times\r$ and $\Sigma$ has no interior point in $\Gamma\times\r$. Thus, we can use the maximum principle with respect to horizontal slices from the highest point of $\Sigma$ to the lowest point of $\Sigma$, which proves that $\Sigma$ is a graph. 

Moreover, let $\Sigma_0$ be the graph of $h$ or $-h$ which points in the same direction (upwards or downwards) as $\Sigma$. 
Moving $\Sigma$ vertically upwards until $\Sigma$ and $\Sigma_0$ are disjoint, and coming down again we observe that, from the maximum principle, $\Sigma$ cannot touch $\Sigma_0$ till the boundaries
agree. Hence, $\Sigma$ is above $\Sigma_0$. Repeating the same process, but moving now $\Sigma$ vertically downwards, one has $\Sigma$ is below $\Sigma_0$. Therefore, $\Sigma$ and $\Sigma_0$ agree, as we wanted to show.
\end{proof}
\begin{no}
It is an interesting open question if, under the previous conditions on $\m^n$, there exists an entire vertical graph over $\m^n$ for every constant mean curvature $H_0\in(0,(n-1)/n]$.
\end{no}
\begin{teo}\label{t8}
Let $B_r$ be a closed geodesic ball of radius $r>0$ in $\m^n$, $c:= max \{ K_p (\pi) : \partial_t \in \pi, p \in B_r \}$ be the maximum of the radial sectional curvatures on $B_r$, and $K_0>0$ such that $r_{c,n}^*(K_0) = r$. Then, there  exists a strictly convex graph $h_K$ over $B_r$ of constant Gauss-Kronecker curvature $K>0$ in $\m^n \times \r$ y $h_K |_{\partial B_r}=0$, for any $K<K_0$.
\end{teo}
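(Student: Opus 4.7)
The plan is to recast the statement as a Dirichlet problem for a Monge--Amp\`ere type equation on $B_r$ and to solve it by the continuity method, with the geometric barriers manufactured through the deformation process of Section~\ref{s3}.

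For the barrier, note that $r_{c,n}^{\ast}$ is strictly decreasing, so $R:=r_{c,n}^{\ast}(K)>r$ whenever $K<K_0$. Take the topological sphere $\Sigma_K\subset\m^n(c)\times\r$ of constant Gauss--Kronecker curvature $K$; its lower hemisphere is a rotational graph of a height function $f(t)$ defined over a geodesic ball of radius $R$ in $\m^n(c)$, with $f'(t)>0$ on $(0,R)$. Let $q$ be the center of $B_r\subset\m^n$ and take geodesic polar coordinates $(x,t)$ at $q$, well defined on $\overline{B_r}$ because $B_r$ is a closed geodesic ball and $r<R$. Set
\[\psi(x,t)=(x,t,f(t))\in\m^n\times\r,\qquad 0\le t\le r.\]
Since the radial sectional curvatures of $\m^n$ on $B_r$ are bounded above by $c$, Theorem~\ref{t1} (with $\m_1=\m^n$, $\m_2=\m^n(c)$ and constant $i_j$) gives that each principal curvature of $\psi$ dominates the corresponding principal curvature of the lower hemisphere of $\Sigma_K$. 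In particular $\psi$ is strictly convex with Gauss--Kronecker curvature $\ge K$ pointwise on $\overline{B_r}$. After a vertical translation we may assume $\psi|_{\partial B_r}\subset\m^n\times\{0\}$, so that $\psi<0$ on $B_r\setminus\partial B_r$.

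This subsolution yields the key a priori estimates. Any strictly convex graph $h$ over $B_r$ with constant Gauss--Kronecker curvature $K$ and $h|_{\partial B_r}=0$ is automatically $\le 0$ in $B_r$, while the maximum principle for graphs of prescribed Gauss--Kronecker curvature (applied to $\psi$, a strict subsolution sharing the same boundary values) gives $\psi\le h$; this furnishes the $C^0$ estimate. At $\partial B_r$ one has $h=\psi=0$ with $\psi\le h$ nearby, so the inward normal derivative of $h$ along $\partial B_r$ is bounded in absolute value by $f'(r)<\infty$, where $r<R$ is crucial. Convexity of $h$ then promotes this boundary gradient estimate to a global $C^1$ bound.

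The Dirichlet problem for prescribed constant Gauss--Kronecker curvature on convex graphs is elliptic of Monge--Amp\`ere type. Combining the $C^0$ and $C^1$ estimates with Pogorelov--Caffarelli--Nirenberg--Spruck interior and boundary second-order estimates and the Evans--Krylov $C^{2,\alpha}$ theory gives global $C^{2,\alpha}$ bounds, after which the continuity method along a path $K_s\in(0,K]$---initiated at a small $K_s$ where a strictly convex solution is built by perturbation near the flat graph---produces $h_K$: openness follows from invertibility of the linearized operator on strictly convex functions, and closedness from the a priori estimates together with the uniform availability of $\psi$, which prevents loss of strict convexity at the limit. I expect the main obstacle to be this last step: the global $C^2$ estimates and the control of strict convexity along the continuity path in a general Riemannian ambient; the geometric construction of $\psi$ is precisely what makes these PDE estimates available, uniformly in $K$.
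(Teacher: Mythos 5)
Your approach is essentially the same as the paper's: construct the geometric subsolution $S^{\ast}$ by transporting the sphere of constant Gauss--Kronecker curvature $K<K_0$ from $\m^n(c)\times\r$ into $\m^n\times\r$ via Theorem~\ref{t1}, and then invoke the Monge--Amp\`ere existence theory with this subsolution. The only difference is that the paper simply cites Guan~\cite{Gu} (and \cite{Sp}) for the existence step once the subsolution is in hand, whereas you sketch the continuity-method machinery ($C^0$, $C^1$ estimates from the barrier, then Pogorelov--Caffarelli--Nirenberg--Spruck plus Evans--Krylov) that those references contain.
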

\begin{proof} Let us consider a sphere $S$ in $\m^n(c) \times \r$ with positive constant Gauss-Kronecker curvature $K<K_0$. From Theorem \ref{t1}, the corresponding sphere $S^{\ast}$ in $\m^n\times\r$, using polar coordinates at the center of $B_r$, has Gauss-Kronecker curvature greater than or equal to $K$, and so it is a subsolution for the existence of the graph we are looking for. Thus, from \cite{Gu} (see also \cite{Sp}) there exists a strictly convex graph of constant Gauss-Kronecker curvature $K$ and zero boundary data.
\end{proof}

\end{document}